\pgfplotsset{every axis/.append style={
                    label style={font=\Large},
                    tick label style={font=\Large},
                    legend style={font=\Large}
                    }}
\patchcmd{\ttlh@hang}{\parindent\z@}{\parindent\z@\leavevmode}{}{}
\patchcmd{\ttlh@hang}{\noindent}{}{}{}
\newcolumntype{Y}{>{\centering\arraybackslash}X}
\pgfplotsset{compat=1.15}
\newcommand{\logLogSlopeTriangle}[5]
{
    \pgfplotsextra
    {
        \pgfkeysgetvalue{/pgfplots/xmin}{\xmin}
        \pgfkeysgetvalue{/pgfplots/xmax}{\xmax}
        \pgfkeysgetvalue{/pgfplots/ymin}{\ymin}
        \pgfkeysgetvalue{/pgfplots/ymax}{\ymax}

        \pgfmathsetmacro{\xArel}{#1}
        \pgfmathsetmacro{\yArel}{#3}
        \pgfmathsetmacro{\xBrel}{#1-#2}
        \pgfmathsetmacro{\yBrel}{\yArel}
        \pgfmathsetmacro{\xCrel}{\xArel}

        \pgfmathsetmacro{\lnxB}{\xmin*(1-(#1-#2))+\xmax*(#1-#2)} 
        \pgfmathsetmacro{\lnxA}{\xmin*(1-#1)+\xmax*#1} 
        \pgfmathsetmacro{\lnyA}{\ymin*(1-#3)+\ymax*#3} 
        \pgfmathsetmacro{\lnyC}{\lnyA+#4*(\lnxA-\lnxB)}
        \pgfmathsetmacro{\yCrel}{\lnyC-\ymin)/(\ymax-\ymin)}

        \coordinate (A) at (rel axis cs:\xArel,\yArel);
        \coordinate (B) at (rel axis cs:\xBrel,\yBrel);
        \coordinate (C) at (rel axis cs:\xCrel,\yCrel);

        \draw[#5]   (A)-- node[pos=0.5,anchor=north] {\scriptsize{1}}
                    (B)-- 
                    (C)-- node[pos=0.,anchor=west] {\scriptsize{#4}} 
                    (A);
    }
}
\newtheorem{prop}{Proposition}
\DeclareMathOperator{\e}{e}
\DeclareMathOperator{\divergence}{div}
\newcommand{\R}{\mathbb{R}}
\newcommand{\M}{\mathcal{M}}
\newcommand{\E}{\mathcal{E}}
\newcommand{\D}{\mathcal{D}}
\newcommand{\Diss}{\mathbb{D}}
\newcommand{\Entro}{\mathbb{E}}
\newcommand{\poly}{\mathbb{P}}
\newcommand{\G}{\mathcal{G}}
\newcommand{\s}{\sigma}
\renewcommand{\u}{\mbox{\uwave{$u$}}}
\newcommand{\V}{\underline{V}}
\renewcommand{\v}{\underline{v}}
\newcommand{\w}{\underline{w}}
\newcommand{\phid}{\underline{\phi}}
\renewcommand{\l}{\underline{\ell}}
\newcommand{\1}{\mathds{1}}
\newcommand{\one}{\underline{1}}
\definecolor{MyGreen}{RGB}{54,165,54}
\begin{document}
\mainmatter              
\title{A skeletal high-order structure preserving scheme for advection-diffusion equations}
\titlerunning{An high-order structure preserving  scheme for advection-diffusion}  
%
\author{Julien Moatti}
\authorrunning{Julien Moatti} 
%
\tocauthor{Julien Moatti}
\institute{ Inria, Univ. Lille, CNRS, UMR 8524 - Laboratoire Paul Painlev\'e, F-59000 Lille, France \\
\email{julien.moatti@inria.fr}
}

\maketitle              

\begin{abstract}
We introduce a nonlinear structure preserving high-order scheme for anisotropic advection-diffusion equations.
This scheme, based on Hybrid High-Order methods, can handle general meshes. 
It also has an entropy structure, and preserves the positivity of the solution. 
We present some numerical simulations showing that the scheme converges at the expected order, while preserving positivity and long-time behaviour.  
\keywords{Anisotropic advection-diffusion equations, general meshes, high-order schemes, structure preserving methods.}
\end{abstract}
\section{Motivations and context}
We are interested in the discretisation of a linear advection-diffusion equation on general meshes with a high-order scheme.
Let $\Omega$ be an open, bounded, connected polytopal subset of $\R^d$, $d \in \{2,3\}$.
We consider the following problem with homogeneous Neumann boundary conditions: find $u : \R_+ \times \Omega \to \R$ solution to
\begin{equation} \label{pb:evol}
	\left\{
	\begin{split}
		\partial _ t u - \divergence ( \Lambda  (\nabla u + u \nabla \phi )  ) &= 0 &&\text{ in } \R_+ \times \Omega, \\
		\Lambda (\nabla u + u \nabla \phi ) \cdot n &= {0} &&\text{ on } \R_+ \times \partial \Omega,\\		
	 	u(0,\cdot)&= u^{in} &&\text{ in } \Omega ,
	\end{split}
	\right.
\end{equation}
where $n$ is the unit normal vector to $\partial\Omega$ pointing outwards {from} $\Omega$. We assume that the data satisfy:
(i) $\Lambda \in L^\infty(\Omega;\R^{d\times d})$ is a uniformly elliptic diffusion tensor: 
there exists $\lambda_\flat > 0$ such that,
for a.e.~$x$ in $\Omega$, $ \Lambda (x) \xi \cdot \xi \geq \lambda_\flat|\xi|^2$ for all $\xi \in \R^d$; 
(ii) $\phi \in C^1(\overline{\Omega})$ is a regular potential;
(iii) $u^{in} \in L^1(\Omega)$ is a non-negative initial datum, such that $\int_\Omega u^{in} \log\left(u^{in}\right) < \infty$.
The solutions to \eqref{pb:evol} enjoy some specific and well-known properties. 
First the mass is preserved along time, i.e. for almost every $t>0$, $\int_{\Omega}u(t)= \int_\Omega u^{in} = M$ where $M >0$ is the initial mass. 
Second, the solution is positive {for $t>0$}. 
Last, the solution has a specific long-time behaviour:  
it converges exponentially fast when $t \to \infty$ towards the thermal equilibrium $u^\infty$, solution to the stationary problem associated to \eqref{pb:evol}, defined as $u^\infty= \frac{M }{\int_\Omega \e^{-\phi}}\e^{-\phi} $.

%
%
In order to get a reliable numerical approximation of such problems, one has to preserve these structural properties at the discrete level.
It is {well-known that two-point} finite volume methods are structure preserving (see \cite{CHH:20} for the long-time behaviour),
but these methods can only be used for isotropic tensors on meshes satisfying some orthogonality conditions.
On the other hand, finite volume methods (using auxiliary unknowns) for anisotropic problems on general meshes
were introduced in the past twenty years, but none of these linear methods preserve the positivity of the solutions (see \cite{Droni:14}).
A possible alternative was proposed in \cite{CaGui:17}, {with the introduction and analysis of a nonlinear positivity preserving Vertex Approximate Gradient VAG scheme.}  
Following these ideas, a nonlinear Hybrid Finite Volume (HFV) scheme was designed in \cite{CHHLM:22}.
\newline
All the schemes discussed above are at most of order two in space {(in $L^2$ norm)}. 
The aim of this paper is to introduce a high-order scheme preserving the three structural properties discussed above.
Since the HFV method coincides with the low-order version of the Hybrid High-Order (HHO) scheme introduced in \cite{DiPEL:14}, we propose an HHO generalisation of the scheme introduced in \cite{CHHLM:22}.
Numerical results indicate that this scheme offers a better efficiency in terms of computational cost than low order schemes. 
%
%
%
%
%

\section{Discrete setting and scheme}
\subsection{Mesh}
We define a discretisation of $\Omega$ as a {pair} $\mathcal{D} = ( \M, \E)$, where:
\begin{itemize}
		\item the mesh $\M$ is a partition of $\Omega$ into \emph{cells}, i.e., a finite family of nonempty disjoint open polytopal subsets $K$ of $\Omega$ such that $\overline{\Omega} = \bigcup _ {K \in \M } \overline{K}$, 
%
%
		\item the set of faces $\E$ is a partition of the mesh skeleton $\bigcup_{K\in\M}\partial K$ into \emph{faces} $\s$ which are subsets contained in hyperplanes of $\overline{\Omega}$.
We denote by $\E_K$ the set of faces of the cell $K$, and we define  $n_{K,\s} \in \R^d$ as the unit normal vector to $\s$ pointing outwards from $K$.	
\end{itemize}
The diameter of a subset $X \subset \overline{\Omega}$ is denoted by $h_X = \sup \lbrace|x-y| \mid (x,y) \in X^2  \rbrace$. 
We define the {mesh size} of $\D$ as $h_\D = \sup \lbrace h_K \mid K \in \M  \rbrace$.
We refer to~\cite[Section 1.1]{DiPDr:20} for more detailed  statements about the mesh and its regularity.
\subsection{Polynomials, discrete unknowns and discrete operators}
In the following, $k$ is a fixed non-negative integer.
First, we introduce polynomial spaces on a subset $X \subset \overline{\Omega}$: 
$\poly^k(X)$ and $\poly^k(X)^d$ denote respectively the spaces of polynomial functions $X \to \R$  and polynomial vector fields $X  \to \R^d$ of degree at most $k$.
Given $Y \subset \overline{X}$, we also define the $L^2$-projector $\Pi_Y^k : C^0(\overline{X}) \to \poly^{k}(Y)$ by the relation
$
	\forall  w \in \poly^k(Y), \, \int_Y \Pi_Y^k(v) w = \int_Y v w.
$
%

We now introduce the set of discrete unknowns corresponding to the mixed-order HHO method \cite{CEP:21,DiPDr:20}, with face unknowns of degree $k$ and (enriched) cells unknowns of degree $k+1$: 
\begin{equation*}
	\V _\D^{k,k+1}  = \left \lbrace \v_{\D} =\big( (v_K )_{K \in \M } ,
			(v_\s)_{\s \in \E} \big) 
			\left |
			\begin{array}{ll}
				\forall K\in\M, & v_K \in \poly^{k+1}(K)  \\
				\forall\s\in\E, & v_\s \in \poly^k(\s)   
			\end{array}				 				
			\right. 			
			\right \rbrace.
\end{equation*}
Given a cell $K \in \M$, we let 
$
	\V_K^{k,k+1} = \poly^{k+1}(K) \times \prod_{\s \in \E_K} \poly^k(\s)
$
be the restriction of $\V_\D^{k,k+1}$ to $K$, and for any generic discrete unknown $\v_{\D} \in \V_{\D}^{k,k+1}$ we denote by $\v_K=\big(v_K,(v_{\s})_{\s\in\E_K}\big)\in\V_K^{k,k+1}$  its local restriction to the cell $K$.
Given any $\v_{\D} \in \V_{\D}^{k,k+1}$, we associate two piecewise polynomial functions $v_\M : \Omega \to \R$ and $v_\E : \bigcup_{K \in \M} \partial K  \to \R$ such that 
\[
	{v_\M}_{|K} = v_K \text{ for all } K \in \M \text{ and } {v_\E}_{|\s} = v_\s \text{ for all } \s \in \E. 
\] 
We also introduce $\one_\D \in \V_\D ^{k,k+1}$ the discrete element such that $1_K = 1$ for any cell $K \in \M$ and $1_\s = 1$ for any face $\s \in \E$.

Now, given a cell $K \in \M$, we define a local \emph{discrete gradient operator} $G_K^k : \V_K^{k,k+1} \to \poly^k(K)^d$ such that, for any $\v_K \in \V_K^{k,k+1}$, $G_K^k(\v_K)$ satisfies
\begin{equation} \label{def:grad}
	\int_K G_K^k(\v_K)  \cdot \tau = 
	\int_K \nabla v_K \cdot \tau 
	+ \sum_{\s \in \E_K} \int_\s (v_\s - v_K) \tau \cdot n_{K,\s} \quad \forall \tau \in  \poly^k(K)^d.
\end{equation}
For any face $\s \in \E_K$, we also define the \emph{jump operator} $J_{K,\s} : \V_K^{k,k+1} \to \poly^k(\s)$ by 
\begin{equation} \label{def:jumps}
	J_{K,\s} (\v_K) = \Pi_\s^k(v_K) - v_\s .
\end{equation}
\subsection{Scheme}
Following the ideas from \cite{CaGui:17,CHHLM:22} our scheme relies on a nonlinear reformulation of Problem \eqref{pb:evol}. To do so, we introduce the logarithm potential $\ell = \log(u)$ and the quasi-Fermi potential $w = \ell + \phi$. At least formally, one has the following relation: 
\begin{equation} \label{eq:nlflux}
  \nabla u + u \nabla \phi = u \nabla \left ( \log(u) + \phi \right )= \e^\ell  \nabla w.
\end{equation}
The scheme relies on this formulation. We will \emph{discretise the potentials as polynomials}, i.e. approximate $\ell$ and $w$ as discrete unknowns in $\V_\D^{k,k+1}$. Then, mimicking the relation $u = \e^\ell,$ we will reconstruct the density thus ensuring its positivity.
Therefore, a solution $\left (\l_\D^n \right)_{n \geq 1}$ to the scheme \eqref{sch:AD} corresponds to an approximation of the logarithms of the solution $u$ (density).

More specifically, for a given discretisation $\l_\D \in \V_\D^{k,k+1}$  of the potential $\ell$, one associates a discrete density ${\u_\D} = (u_\M, u_\E)$ defined as a {pair} of piecewise smooth functions where $u_\M : \Omega \to \R$ corresponds to the cells unknowns and $u_\E : \bigcup_{K \in \M} \partial K  \to \R$ corresponds to the face unknowns, defined as  
\begin{equation} \label{def:ud}
	{u_\M} =  \exp(\ell_\M) 
	\text{ and }
	{u_\E} =  \exp(\ell_\E).
\end{equation}
Note that a discrete density $\u_\D$ is not a collection of polynomials (which is highlighted by the use of the wave {under u}), but it enjoys positivity, both on cells and faces, since it is defined as the exponential of real functions.

Our scheme is based on local contributions on cells, split into { a consistent term and a stabilisation term}. 
Given $K \in \M$ and $\eta_l > 0$, the classical discrete counterpart of $(w,v) \mapsto \int_K \Lambda \nabla w \cdot \nabla v$ is the bilinear form (see \cite[Section 3.2.1]{CEP:21}) 
\[
	a_K : (\w_K, \v_K) \mapsto
	\int_K \Lambda G_K^k(\w_K) \cdot  G_K^k(\v_K)
		+ \eta_l \sum_{\s \in \E_K}  \frac{\Lambda_{K\s}}{h_\s} 
			\int_\s J_{K,\s}(\w_K) J_{K,\s}(\v_K), 
\]	
where {$\Lambda_{K\s} = \|\Lambda_{\mid K} n_{K\s} \cdot n_{K\s}  \|_{L^\infty(\s)}$}.
Similarly, given $\eta_{nl} > 0$, we define a local discretisation of $(\ell,w,v)\mapsto \int_K \e^\ell \Lambda \nabla w \cdot \nabla v$ as a sum of nonlinear consistent \eqref{sch:cons} and stabilisation \eqref{sch:stab_nl} contributions:
\vspace{-0.2cm}
\begin{subequations}\label{sch:loc}
        \begin{empheq}{align}
        	\mathcal{C}_K(\l_K,\w_K, \v_K) 
        		&= \int_K \e^{\ell_K} \Lambda G_K^k(\w_K) \cdot G_K^k(\v_K), \label{sch:cons} \\
 		\mathcal{S}_K(\l_K, \w_K, \v_K) 
 			&=  \eta_{nl} \sum_{\s \in \E_K } \frac{\Lambda_{K\s}}{h_\s} 
				\int_\s  \frac{ \e^{\Pi_\s^k(\ell_K)} + \e^{ \ell_\s}  }{2}
	 			J_{K,\s}(\w_K) J_{K,\s}(\v_K).  \label{sch:stab_nl} 
        \end{empheq}
\end{subequations}
\vspace{-0.2cm}
We can now define a local application $\mathcal{T}_K :  \V_K^{k,k+1} \times \V_K^{k,k+1} \times \V_K^{k,k+1} \to \R$  by 
\begin{equation}
	\mathcal{T}_K (\l_K,\w_K, \v_K) 
		= \mathcal{C}_K(\l_K,\w_K, \v_K) + \mathcal{S}_K(\l_K, \w_K, \v_K)
		+  \varepsilon h_K^{k+2} a_K(\w_K, \v_K),
\label{sch:stationnary}
\end{equation}
where $\varepsilon$ is a non-negative parameter.
At the global level, we define $\mathcal{T}_\D :  \V_\D^{k,k+1} \times \V_\D^{k,k+1} \times \V_\D^{k,k+1} \to \R$  by summing the local contributions: 
\begin{equation}
	\mathcal{T}_\D (\l_\D,\w_\D, \v_\D) 
		= \sum_{K \in \M} \mathcal{T}_K (\l_K,\w_K, \v_K).
\label{sch:stationnary:global}
\end{equation}
%
%
We let $\phid_\D \in \V_\D^{k,k+1}$ be the interpolate of $\phi$: for any $K \in \M$, $\phi_K = \Pi_K^{k+1}(\phi)$ and for all $\s \in \E$, $\phi_\s = \Pi_\s^{k}(\phi)$. 
Now, using a backward Euler discretisation in time with time step $\Delta t > 0$, we introduce the following scheme for \eqref{pb:evol}: 
\newline
find $\left (\l_\D^n \right)_{n \geq 1} \in \left (\V_\D^{k,k+1} \right ) ^\mathbb{N^*}$ such that 
\begin{subequations}\label{sch:AD}
        \begin{empheq}[left = \empheqlbrace]{align}
        	\int_\Omega \frac{u^{n+1}_\M - u^n_\M}{\Delta t} v_\M 
        		&= -  \mathcal{T}_\D(\l_\D^{n+1}, \l_\D^{n+1} +  \phid_\D, \v_\D)
        		&&\forall \v_\D \in \V_\D^{k,k+1}, \label{sch:test}\\
        		u^0_K &= {u^{in}}_{|K}    	&&\forall K\in\M. \label{sch:ini} 
        \end{empheq}
\end{subequations}
Given a solution $\left (\l_\D^n \right)_{n \geq 1}$ to the scheme \eqref{sch:AD},  
as discussed above, we associate a sequence of positive discrete densities $\left (\u_\D^n \right ) _{n \geq 1}$.
\begin{remark}[Parameter $\varepsilon$]
Note that $\mathcal{T}_\D$ is to be understood as a discretisation of $(\ell,w,v) \mapsto \int_\Omega (\e^\ell + \epsilon) \Lambda\nabla  w \cdot \nabla v $, with {$\epsilon \sim \varepsilon h_\D^{k+2}$} a small parameter.
The $\epsilon$ perturbation is used in order to show the existence result of Proposition \ref{prop:exist} and can be seen as a kind of stabilisation. The scaling factor $h_K^{k+2}$ in \eqref{sch:stationnary} is used to get the expected order of convergence.
In practice, numerical results for $\varepsilon = 1$ and $\varepsilon = 0$ are almost the same. 
The influence of this term will be investigated in future works.
\end{remark}

We define the discrete thermal equilibrium as $\u_\D^\infty = ( \rho \e^{-\phi_\M} ,\rho \e^{-\phi_\E}) $, with $\rho = M / \int_\Omega \e^{-\phi_\M}$. One can show that $\u_\D^\infty$ (and the associated logarithm potential $\l_\D ^\infty \in \V_\D^{k,k+1}$) is the only stationary solution to \eqref{sch:AD} with mass $M$.

\section{Main features of the scheme}
In this section, we present some results regarding the analysis of the scheme~\eqref{sch:AD}. 
Given $\l_\D \in \V_\D^{k,k+1}$ a discrete logarithm, we associate a discrete quasi-Fermi potential defined as $\w_\D = \l_\D + \phid_\D - \log(\rho) \one_\D$. 
By definition of $\rho$, one has $w_\M = \log \left ( \frac{u_\M}{u_\M^\infty} \right)$.
Note that, for any $(\l_\D,\v_\D) \in \V_\D^{k,k+1} \times\V_\D^{k,k+1}  $, we have $\mathcal{T}_\D(\l_\D, \l_\D +  \phid_\D, \v_\D) = \mathcal{T}_\D(\l_\D, \w_\D, \v_\D)$.
We now state our fundamental a priori results. 
\begin{prop}[Fundamental a priori relations]
Let $\left (\l_\D^n \right)_{n \geq 1}$ be a solution to the scheme \eqref{sch:AD}, and $\left (\u_\D^n \right ) _{n \geq 1}$ be the associated reconstructed discrete density. 
Then, the following a priori results hold:
\begin{enumerate}
	\item[(i)] the mass is preserved along time: 
$
	\displaystyle \forall n \in \mathbb{N}^*, \, \int_\Omega u^n_\M = \int_\Omega u^{in} = M
$, 
	\item[(ii)] a discrete entropy/dissipation relation holds:
$ \displaystyle	\forall n \in \mathbb{N}, \, \frac{\Entro^{n+1} - \Entro^n }{\Delta t} \leq - \Diss^{n+1}$, 

where the discrete entropy and dissipation are defined by 
$\Entro^n = \int_\Omega u^\infty_\M \Phi_1 \left ( \frac{u^n_\M}{u^\infty_\M}\right )$
and 
$\Diss^{n} = \mathcal{T}_\D (\l^n_\D,\w^n_\D, \w^n_\D) \geq 0$
with $\Phi_1:s \mapsto s \log(s) - s +1$ (and $\Phi_1(0) = 1$).
\end{enumerate}
\end{prop}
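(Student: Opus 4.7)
Both statements follow the standard recipe for structure-preserving schemes: test the weak formulation \eqref{sch:test} with a well-chosen discrete function, then exploit algebraic properties of $\mathcal{T}_\D$ and convexity of the entropy functional $\Phi_1$.

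\textbf{Step 1 --- Mass conservation.} I would first check that $\mathcal{T}_\D(\cdot,\cdot,\one_\D) \equiv 0$. Indeed, both operators $G_K^k$ and $J_{K,\s}$ vanish on the constant element $\one_K$: since $\nabla 1 = 0$ and $1_K - 1_\s = 0$, the defining relation \eqref{def:grad} gives $G_K^k(\one_K) = 0$, while \eqref{def:jumps} gives $J_{K,\s}(\one_K) = \Pi_\s^k(1) - 1 = 0$. Consequently every term in $\mathcal{C}_K$, $\mathcal{S}_K$, and $a_K$ vanishes. Testing \eqref{sch:test} with $\v_\D = \one_\D$ then yields $\int_\Omega (u_\M^{n+1} - u_\M^n) = 0$. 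A trivial induction on $n$, anchored by the initial condition \eqref{sch:ini} which ensures $\int_\Omega u_\M^0 = \int_\Omega u^{in} = M$, concludes~(i).

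\textbf{Step 2 --- Entropy/dissipation inequality.} The key observation is that, by definition of $\w_\D^{n+1}$ and the remark preceding the proposition, one has $\mathcal{T}_\D(\l_\D^{n+1}, \l_\D^{n+1} + \phid_\D, \v_\D) = \mathcal{T}_\D(\l_\D^{n+1}, \w_\D^{n+1}, \v_\D)$ for every test function (using once more that $\mathcal{T}_\D$ vanishes on $\one_\D$ in its second slot, so subtracting $\log(\rho)\one_\D$ is harmless). I would then test \eqref{sch:test} with $\v_\D = \w_\D^{n+1}$, which gives
\begin{equation*}
        \int_\Omega \frac{u_\M^{n+1} - u_\M^n}{\Delta t}\, w_\M^{n+1} = - \Diss^{n+1}.
\end{equation*}
Since $w_\M^{n+1} = \log(u_\M^{n+1}/u_\M^\infty) = \Phi_1'(u_\M^{n+1}/u_\M^\infty)$ and $\Phi_1$ is convex, the tangent inequality
\begin{equation*}
        \Phi_1(b) - \Phi_1(a) \leq (b-a)\,\Phi_1'(b)
\end{equation*}
applied pointwise with $a = u_\M^n/u_\M^\infty$ and $b = u_\M^{n+1}/u_\M^\infty$, then multiplied by $u_\M^\infty \geq 0$ and integrated over $\Omega$, yields
\begin{equation*}
        \Entro^{n+1} - \Entro^n \leq \int_\Omega (u_\M^{n+1} - u_\M^n)\, w_\M^{n+1} = -\Delta t\, \Diss^{n+1}.
\end{equation*}

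\textbf{Step 3 --- Positivity of the dissipation.} It remains to verify $\Diss^{n+1} \geq 0$ (with strict positivity away from the equilibrium). This is purely algebraic: in the decomposition \eqref{sch:cons}--\eqref{sch:stab_nl}, the consistent term is a weighted quadratic form in $G_K^k(\w_K^{n+1})$ with weight $\e^{l_K^{n+1}} \Lambda$ which is positive definite by assumption (i) on $\Lambda$; the stabilisation term is a sum of squared jumps weighted by $\tfrac{\e^{\Pi_\s^k(l_K)} + \e^{l_\s}}{2} > 0$; and the $\varepsilon h_K^{k+2} a_K$ contribution is non-negative by the standard coercivity of the HHO bilinear form.

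\textbf{Main obstacle.} The only genuinely non-trivial point is the transition from a discrete time-derivative of $u_\M$ to a discrete time-derivative of the entropy functional, and it is resolved by the convexity inequality above --- this is the exact discrete analogue of the continuous entropy computation. The subtlety to watch out for is that $\u_\D$ is \emph{not} a polynomial, so one must legitimately manipulate $u_\M^{n+1} = \e^{l_\M^{n+1}}$ under the integral; positivity of $u_\M^\infty$ and $u_\M^{n+1}$ guarantees that $\log$ is well defined throughout.
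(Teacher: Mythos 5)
Your proposal is correct and follows exactly the same route as the paper's (much terser) proof: test \eqref{sch:test} with $\one_\D$ for mass conservation, and with $\w_\D^{n+1}$ combined with the convexity (tangent) inequality for $\Phi_1$ and the identity $w_\M^{n+1}=\Phi_1'(u_\M^{n+1}/u_\M^\infty)$ for the entropy--dissipation relation. The details you supply (vanishing of $G_K^k$ and $J_{K,\s}$ on constants, invariance of $\mathcal{T}_\D$ under adding $\log(\rho)\one_\D$ in the second slot, sign of each term of $\Diss^{n+1}$) are precisely what the paper leaves implicit.
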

\begin{proof}
Using $\one_\D$ as a test function in \eqref{sch:test}, alongside with \eqref{sch:ini}, we get the mass conservation identity (i).
To get (ii), we test \eqref{sch:test} with $\w_\D^{n+1}$,
 and we use the convexity of $\Phi_1$ alongside with the expression of $w_\M^{n+1}$.
\end{proof}
Note that the previous results hold for any $\varepsilon \geq 0$.
Following the ideas of \cite{CaGui:17,CHHLM:22}, the entropy/dissipation relation should allow one to analyse the long-time behaviour of the discrete solutions and to get convergence results. These aspects will be the topics of future works.
We now state an existence result, which holds only for positive $\varepsilon$. The proof follows the strategy used in \cite{CHHLM:22}.
\begin{prop}[Existence of solutions] \label{prop:exist}
Assume that the stabilisation parameter $\varepsilon$ in \eqref{sch:stationnary} is positive.
Then, there exists at least one solution $\left (\l_\D^n \right)_{n \geq 1}$ to the scheme \eqref{sch:AD}. The associated densities $\left (\u_\D^n \right ) _{n \geq 1}$ are positive functions.
\end{prop}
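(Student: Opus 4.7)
The plan is to proceed by induction on the time step, positivity of $\u_\D^{n+1}$ being immediate from the definitions $u_\M^{n+1}=\exp(l_\M^{n+1})$ and $u_\E^{n+1}=\exp(l_\E^{n+1})$. Fix $n\geq 0$ with $u_\M^n$ given, and rephrase the existence of $\l_\D^{n+1}$ as finding a zero of the continuous map $\mathcal{F}\colon \V_\D^{k,k+1}\to\V_\D^{k,k+1}$ characterised, through any inner product $(\cdot,\cdot)_\D$ on the finite-dimensional space $\V_\D^{k,k+1}$, by
\[
(\mathcal{F}(\l_\D),\v_\D)_\D = \int_\Omega \frac{\e^{l_\M}-u_\M^n}{\Delta t}v_\M + \mathcal{T}_\D(\l_\D,\l_\D+\phid_\D,\v_\D).
\]
Continuity of $\mathcal{F}$ follows from that of $\exp$ and of the polynomial operations at play, so the problem reduces to a topological degree argument, whose success rests on an a priori bound on any zero of $\mathcal{F}$.

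First I would derive this bound. Assume $\mathcal{F}(\l_\D)=0$ and introduce $\w_\D=\l_\D+\phid_\D-\log(\rho)\one_\D$. Testing with $\one_\D$ delivers mass conservation $\int_\Omega \e^{l_\M}=M$ as in part (i) of the previous proposition (recall $G_K^k(\one_K)=0$ and $J_{K,\s}(\one_K)=0$), while testing with $\w_\D$ and replaying the convexity argument for $\Phi_1$ of part (ii) yields
\[
\Entro^{n+1} + \Delta t\,\mathcal{T}_\D(\l_\D,\w_\D,\w_\D)\leq \Entro^n.
\]
Since $\mathcal{C}_K,\mathcal{S}_K\geq 0$ (positivity of $\e^{l_K}$ and ellipticity of $\Lambda$) and $\Entro^{n+1}\geq 0$, one extracts
\[
\varepsilon\sum_{K\in\M} h_K^{k+2}\,a_K(\w_K,\w_K)\leq \frac{\Entro^n}{\Delta t},
\]
which, under standard mesh regularity, provides a bound on $\w_\D$ (hence on $\l_\D$ modulo an additive constant) in the discrete HHO $H^1$-seminorm induced by $a_K$. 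Combining this seminorm estimate with mass conservation, a discrete Poincaré--Wirtinger inequality on $\V_\D^{k,k+1}$, and Jensen's inequality applied to $\log\int_\Omega\e^{l_\M}=\log M$, one upgrades the seminorm bound to a genuine norm bound $\|\l_\D\|_{\V_\D^{k,k+1}}\leq R$, with $R$ depending on $\varepsilon$, $h_\D$, $\Delta t$, $\Entro^n$ and the data, but not on $\l_\D$ itself.

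To conclude, I would construct a continuous homotopy $\mathcal{F}_\tau$, $\tau\in[0,1]$, joining $\mathcal{F}_1=\mathcal{F}$ to an affine reference $\mathcal{F}_0$ with known unique zero and nonzero Brouwer degree on $B(0,R+1)$ (for instance $\mathcal{F}_0(\l_\D)=\l_\D-\l_\D^\infty$), carefully preserving the linear $\varepsilon$-contribution along the deformation so that the entropy--mass argument above applies uniformly in $\tau$. Zeros of $\mathcal{F}_\tau$ then remain in $B(0,R+1)$, and invariance of the degree forces $\deg(\mathcal{F},B(0,R+1),0)=\deg(\mathcal{F}_0,B(0,R+1),0)\neq 0$, yielding the desired $\l_\D^{n+1}$ and, by induction, the whole sequence.

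I expect the main obstacle to be the upgrade from the seminorm bound delivered by the entropy dissipation to a genuine norm bound on $\l_\D$: the dissipation only controls $\w_\D$ modulo additive constants, while mass conservation is a nonlinear constraint on $\e^{l_\M}$ rather than on $l_\M$ itself. Bridging this gap is precisely what forces the joint use of Jensen and of a discrete Poincaré--Wirtinger inequality, and it is at this very point that the hypothesis $\varepsilon>0$ becomes essential: without it, the linear coercive contribution $\varepsilon h_K^{k+2}a_K$ vanishes, the seminorm control of $\w_\D$ through $\sum_{K\in\M}a_K$ is lost, and the whole scheme of argument collapses --- which is consistent with the restriction $\varepsilon>0$ in the statement.
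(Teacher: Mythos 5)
The paper gives no proof of this proposition, deferring entirely to the strategy of \cite{CHHLM:22}, and your argument --- a topological degree/homotopy argument whose a priori bound comes from testing with $\one_\D$ (mass conservation) and with $\w_\D$ (entropy dissipation), with the non-degenerate coercivity supplied precisely by the $\varepsilon h_K^{k+2}a_K$ term and the additive constant fixed by mass conservation via Jensen and a discrete Poincar\'e--Wirtinger inequality --- is exactly that strategy, including the correct identification of where $\varepsilon>0$ is indispensable. For the homotopy you leave implicit, note that $\l_\D-\l_\D^\infty=\w_\D$, so the convex combination $\mathcal{F}_\tau=\tau\mathcal{F}+(1-\tau)(\l_\D-\l_\D^\infty)$ contributes the non-negative term $(1-\tau)\|\w_\D\|_\D^2$ when tested against $\w_\D$, which lets the uniform-in-$\tau$ bound be arranged as you anticipate.
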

\section{Numerical results} 
The numerical scheme \eqref{sch:AD} requires to solve a nonlinear system of equations at each time step. To do so, we use a Newton method, with an adaptative time stepping strategy: if the Newton method does not converge, we try to compute the solution for a smaller time step $0.5 \times \Delta t$. 
If the method converges, we use for the subsequent time step the value $2 \times \Delta t$. 
The maximal time step allowed is the initial time step.
Each time a linear system has to be solved we perform a static condensation (see \cite[Appendix B.3.2]{DiPDr:20}) in order to eliminate (locally) the cell unknowns. 
Note that the local computations are not implemented in parallel, but only sequentially.
In the sequel, we use the following stabilisation parameters: $\varepsilon = \eta_{nl} = \eta_l = 1$.
\newline
The tests considered below {(on $\Omega = ]0,1[^2$)} are the same as in \cite{CHHLM:22},to which we refer for more detailed explanations and descriptions. 
Given a (face) degree $k$, the scheme \eqref{sch:AD} will be denoted by nlhho\_k, whereas the HFV scheme of \cite{CHHLM:22} will be denoted by nlhfv. 
{Note that nlhho\_0 hinges on affine cell unknowns, whereas the cell unknowns of nlhfv are constant: these two schemes hence do not coincide, and nlhho\_0 is expected to be more costly.}
\subsection{Proof of concept: convergence order and efficiency} \vspace{-0.15cm}
Here, we are interested in the convergence of the scheme when $(h_\D, \Delta t) \to (0,0)$.
To do so, we set the advective potential and diffusion tensor as
$ \phi(x,y) = - x $ and 
$ \Lambda = \begin{psmallmatrix}
l_x & 0 \\ 
0 & 1
\end{psmallmatrix} $ for $l_x>0$.
The exact solution is therefore given by 
\vspace{-0.2cm}
\[
	u(t,x,y) = C_1\e^{-\alpha t + \frac{x}{2}} \left ( 2\pi \cos( \pi x) + \sin(\pi x) \right )
	+ 2 C_1 \pi \e^{ x - \frac{1}{2}  }, 
\]
where $C_1> 0$ and $ \alpha = l_x \left ( \frac{1}{4} + \pi^2  \right )$. Note that $u^{in}$ vanishes on 
$\{ x = 1 \}$, but for any $t > 0$, $u(t, \cdot) > 0$.
Here, our experiments are performed using $l_x =  1$ and $C_1 = 10^{-1}$.
%
%
%
\begin{figure}[ht]
\begin{minipage}[c]{.51\linewidth}
\begin{tikzpicture}[scale= 0.67]
        \begin{loglogaxis}[
            legend style = { 
              at={(0.5,1.1)},
              anchor = south,
              tick label style={font=\footnotesize},
              legend columns=4
            },ylabel=\small{Relative $L^2_t(L^2_x)$-error on the solution},xlabel=\small{Spatial {mesh size} $h_\D$}
          ]
          \addplot table[x=Meshsize,y=error_sol] {graph_data/time_CV_adpt/errors_hho_0};          
          \addplot table[x=Meshsize,y=error_sol] {graph_data/time_CV_adpt/errors_hho_1}; 
          \addplot table[x=Meshsize,y=error_sol] {graph_data/time_CV_adpt/errors_hho_2};             
          \addplot table[x=Meshsize,y=error_sol] {graph_data/time_CV_adpt/errors_hho_3};    
          \logLogSlopeTriangle{0.90}{0.2}{0.1}{2}{black};
          \logLogSlopeTriangle{0.90}{0.2}{0.1}{3}{black};
          \logLogSlopeTriangle{0.90}{0.2}{0.1}{4}{black};
          \logLogSlopeTriangle{0.90}{0.2}{0.1}{5}{black};
          \legend{\tiny nlhho\_0 , \tiny nlhho\_1 , \tiny nlhho\_2 , \tiny nlhho\_3 } 
        \end{loglogaxis}
      \end{tikzpicture}    
\end{minipage}
\begin{minipage}[c]{.51\linewidth}
\begin{tikzpicture}[scale= 0.67]
        \begin{loglogaxis}[
            legend style = { 
              at={(0.5,1.1)},
              anchor = south,
              tick label style={font=\footnotesize},
              legend columns=4
            },ylabel=\small{Relative $L^2_t(L^2_x)$-error on the gradient},xlabel=\small{Spatial {mesh size} $h_\D$}
          ]          
          \addplot table[x=Meshsize,y=error_grad] {graph_data/time_CV_adpt/errors_hho_0};          
          \addplot table[x=Meshsize,y=error_grad] {graph_data/time_CV_adpt/errors_hho_1}; 
          \addplot table[x=Meshsize,y=error_grad] {graph_data/time_CV_adpt/errors_hho_2};            
          \addplot table[x=Meshsize,y=error_grad] {graph_data/time_CV_adpt/errors_hho_3};   
          \logLogSlopeTriangle{0.90}{0.2}{0.1}{1}{black};    
          \logLogSlopeTriangle{0.90}{0.2}{0.1}{2}{black};          
          \logLogSlopeTriangle{0.90}{0.2}{0.1}{3}{black};             
          \logLogSlopeTriangle{0.90}{0.2}{0.1}{4}{black};  
          \legend{\tiny nlhho\_0 , \tiny nlhho\_1 , \tiny nlhho\_2 , \tiny nlhho\_3 }      
        \end{loglogaxis}
      \end{tikzpicture}
\end{minipage}
\vspace{-0.1cm}
\caption{\textbf{Accuracy of transient solutions.} Relative error on triangular meshes.}
\label{fig:CV:evol:order}
\end{figure}
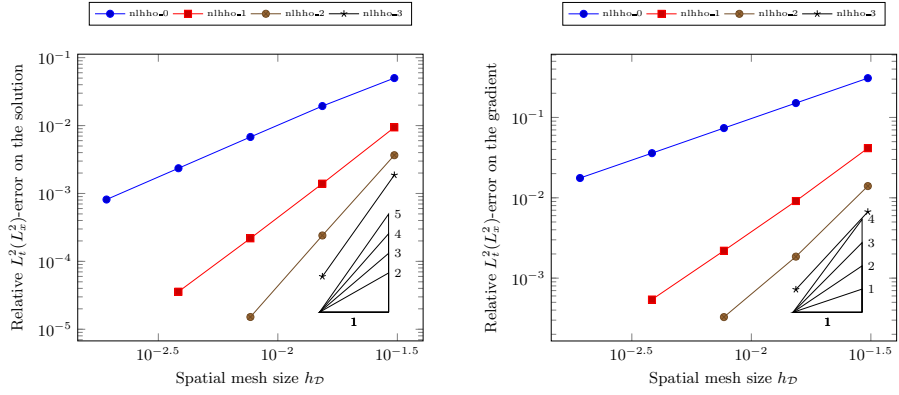
We compute the solution on the time interval $[0,0.1]$, and we denote by $(\u^n_\D)_{1 \leq n \leq N_f}$ the corresponding discrete density. Then, we compute the relative $L^2_t(L^2_x)$ error on the solution and on the gradient of the solution, defined as 
\[
	 \frac{\sqrt{\sum_{n = 1}^{N_f} \delta t^n  \|u_\M^n - u(t^n, \cdot ) \|^2_{L^2(\Omega)} }}
	 	{ \|u\|_{L^2_t(L^2_x)}}
	\text{ and }	
	\frac{\sqrt{ \sum_{n = 1}^{N_f}  \delta t^n  \| \G_\M(\u^n_\D) - \nabla u(t^n, \cdot ) \|^2_{L^2(\Omega)}}}
	{\|\nabla u\|_{L^2_t(L^2_x)}} 
\]
where $\delta t^n = t^{n}-t^{n-1}$ {and the discrete gradient $\G_\M(\u^n_\D)$ is defined by mimicking the continuous relation $\nabla u = \e^\ell \nabla \ell $ as a piecewise continuous function satisfying 		
${\G_\M (\u_\D) }_{| K } = \exp(\ell_K) \,  G_K^k(\l_K)$ on $K \in \M$.}
The $L^2$ norms are computed using quadrature formulas of order $2k +5$.
Note that, with the chosen definitions, we do not take into account the time $t=0$. 
%
%
\begin{figure}[h]
\begin{minipage}[c]{.51\linewidth}
\begin{tikzpicture}[scale= 0.67]
        \begin{loglogaxis}[
            legend style = { 
              at={(0.5,1.1)},
              anchor = south,
              tick label style={font=\footnotesize},
              legend columns=4
            },ylabel=\small{Relative $L^2_t(L^2_x)$-error on the solution},xlabel=\small{Total time of the computation (in $s$)}
          ]
          \addplot table[x=time_cost,y=error_sol] {graph_data/time_CV_adpt/errors_hho_0};          
          \addplot table[x=time_cost,y=error_sol] {graph_data/time_CV_adpt/errors_hho_1};
          \addplot table[x=time_cost,y=error_sol] {graph_data/time_CV_adpt/errors_hho_2};             
          \legend{\tiny nlhho\_0 , \tiny nlhho\_1 , \tiny nlhho\_2  }      
        \end{loglogaxis}
      \end{tikzpicture}    
\end{minipage}
\begin{minipage}[c]{.51\linewidth}
\begin{tikzpicture}[scale= 0.67]
        \begin{loglogaxis}[
            legend style = { 
              at={(0.5,1.1)},
              anchor = south,
              tick label style={font=\footnotesize},
              legend columns=4
            },ylabel=\small{Relative $L^2_t(L^2_x)$-error on the gradient},xlabel=\small{Total time of the computation (in $s$)}
          ]          
          \addplot table[x=time_cost,y=error_grad] {graph_data/time_CV_adpt/errors_hho_0};          
          \addplot table[x=time_cost,y=error_grad] {graph_data/time_CV_adpt/errors_hho_1};          
          \addplot table[x=time_cost,y=error_grad] {graph_data/time_CV_adpt/errors_hho_2};            
          \legend{\tiny nlhho\_0 , \tiny nlhho\_1 , \tiny nlhho\_2 } 
        \end{loglogaxis}
      \end{tikzpicture}
\end{minipage}
\vspace{-0.2cm}
\caption{\textbf{Accuracy vs. computational cost.} Relative errors on triangular meshes.}
\label{fig:CV:evol:cost}
\end{figure}
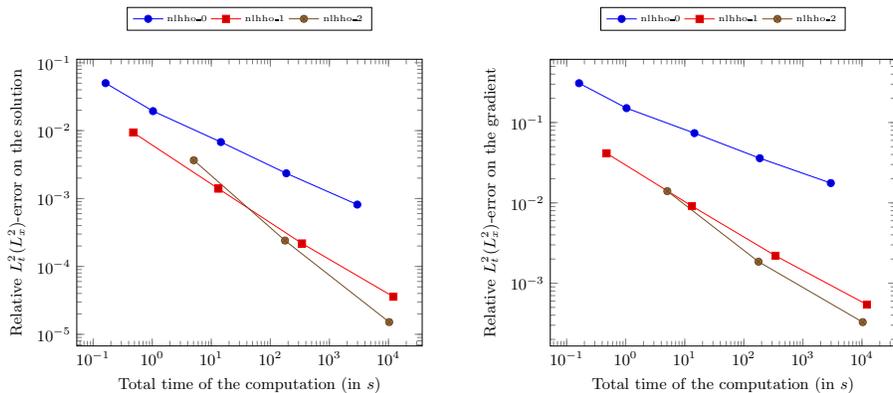
%
To plot the error graphs, we do simulations on a triangular mesh family $(\D_i)_{1\leq i \leq 5}$, such that $h_{\D_{i}} /h_{\D_{i+1}} = 2$.
Since the time discretisation is of order one, on the i-th mesh of the family, we use a time step of $\Delta t_i = \Delta t_k / 2^{(i-1)}$, where $\Delta t_k = 0.05 / 2^{k+2}$ is the initial time step used on $\D_1$. 

In Figure~\ref{fig:CV:evol:order}, we see that the scheme, for face unknowns of degree $k$, converges at order $k+1$ in energy norm and $k+2$ in $L^2$ norm of the density.
In Figure~\ref{fig:CV:evol:cost}, we plot the errors as functions of the computing time to get the solution. 
It is remarkable to see that, even with a low order discretisation in time, significant efficiency gains can be reached by using a high value of $k$. The gain should be even bigger by parallelising the local computations.
{Of course, the use of higher order time-stepping methods should also lead to significant gains, and this should be investigated in future works. However, the way of getting the entropy dissipation relation is currently unclear for such time discretisations.}
\vspace{-0.25cm}
\subsection{Discrete long-time behaviour}
\vspace{-0.15cm}
We are now interested in the long-time behaviour of discrete solutions.
We use the same test-case as before, but with an anisotropic tensor: we set $l_x =  10^{-2}$.
We compute the solution on the time interval $[0,350]$, with $\Delta t = 10^{-1}$, on two Kershaw meshes of sizes $0.02$ and $0.006$. 
\vspace{-0.1cm}
\begin{figure}[h]
\begin{minipage}[c]{.51\linewidth}
\begin{tikzpicture}[scale= 0.65]
        \begin{semilogyaxis}[
            legend style = { 
              at={(0.5,1.1)},
              anchor = south,
              tick label style={font=\footnotesize},
              legend columns=5
            },
            ylabel=\small{$L^1$ distance to $u^\infty$ on the coarsest mesh},
            xlabel=\small{Time}
            ]
          \addplot table[x=Temps,y=Diff_L1] {graph_data/tps_long/mesh4_1_1/tps_nonlin};
          \addplot table[x=Time,y=Diff_L1] {graph_data/tps_long/mesh4_1_1/time_nlhho_0};                   
          \addplot table[x=Time,y=Diff_L1] {graph_data/tps_long/mesh4_1_1/time_nlhho_1};               
          \addplot table[x=Time,y=Diff_L1] {graph_data/tps_long/mesh4_1_1/time_nlhho_2};       				
          \addplot[green] coordinates {
			(0,1)
			(350,4.1484969e-16)
			};
          \legend{\tiny nlhfv , \tiny nlhho\_0,\tiny nlhho\_1, \tiny nlhho\_2, 
           \tiny $ \e^{- \alpha t} $ }          
	      \end{semilogyaxis}
      \end{tikzpicture}    
\end{minipage}
\begin{minipage}[c]{.51\linewidth}
\begin{tikzpicture}[scale= 0.65]
        \begin{semilogyaxis}[
            legend style = { 
              at={(0.5,1.1)},
              anchor = south,
              tick label style={font=\footnotesize},
              legend columns=5
            },ylabel=\small{$L^1$ distance to $u^\infty$ on the finest mesh},xlabel=\small{Time}
          ]
          \addplot table[x=Temps,y=Diff_L1] {graph_data/tps_long/mesh4_1_4/tps_nonlin};          
          \addplot table[x=Time,y=Diff_L1] {graph_data/tps_long/mesh4_1_4/time_nlhho_0};                   
          \addplot table[x=Time,y=Diff_L1] {graph_data/tps_long/mesh4_1_4/time_nlhho_1};         
          \addplot table[x=Time,y=Diff_L1] {graph_data/tps_long/mesh4_1_4/time_nlhho_2};
          \addplot[green] coordinates {
			(0,1)
			(350,4.1484969e-16)
			};
          \legend{\tiny nlhfv , \tiny nlhho\_0,\tiny nlhho\_1, \tiny nlhho\_2, 
           \tiny $ \e^{- \alpha t} $ }       
        \end{semilogyaxis}
      \end{tikzpicture}
\end{minipage}
\caption{\textbf{Long-time behaviour of discrete solutions.} Comparison of the long-time behaviour on Kershaw meshes for $T_f = 350$ and $\Delta t = 0.1$.}
\label{fig:longtime}
\end{figure}
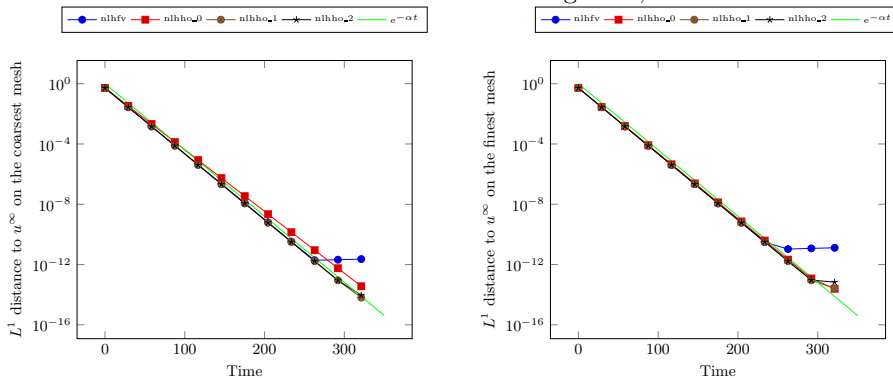
%
In Figure \ref{fig:longtime}, we show the evolution along time of the $L^1$ distance between $\u^n_\D$ and $u^\infty = 2 C_1 \pi \e^{ x - \frac{1}{2} }$ computed as $\int_\Omega |u^n_\M - u^\infty|$.
We observe the exponential convergence towards the steady-state, until some precision is reached. 
The rates of convergence are similar to the exact one ($\alpha$), and do not depend on the size of the mesh.
%

\vspace*{-0.2cm}
\subsection{Positivity}
This last section is dedicated to assessing the discrete positivity preservation.
We set the advection field as
$
\phi(x,y) = -\left (  (x-0.4)^2 +(y-0.6)^2   \right )
$ 
and the diffusion tensor as
$\Lambda = 
	\begin{psmallmatrix}
		0.8 & 0 \\ 
		0 & 1
	\end{psmallmatrix}
$.
For the initial data, we take
$u^{in} = 10^{-3}  \,\1_{B} +   \1_{\Omega \setminus B}$,
where $B$ is the Euclidean ball
$\left \lbrace (x,y) \in \R^2 \mid (x-0.5)^2 + (y-0.5)^2 \leq 0.2^2 \right \rbrace $.
%
%
We perform simulations on the time interval $[0, 5 . 10^{-4}]$ with $\Delta t = 10^{-5}$ on a refined tilted hexagonal-dominant mesh (4192 cells).
%
%
\begin{table}[h]
{\small
\center
\begin{tabularx}{1.\textwidth}{|Y|Y|c|c|c|c|c|} 
\hline 
  	 				& computing time & \#resol	& mincells  		& minfaces 	& mincellQN 	&  minfaceQN \\ 
\hline 
	nlhfv			& 1.77e+01		& 175		& 9.93e-04		& 7.36e-04	& 9.93e-04	&  7.36e-04	\\ 
\hline 
	HMM 				& 2.20e-01		& 50			& -5e-03 		& -7.74e-02	& -5e-03 	& -7.74e-02	\\ 
\hline 
	nlhho\_0 		& 7.17e+01		& 224 		&  1.00e-03		& 1.01-03 	& 2.41e-06 	&  1.01e-03	\\ 
\hline 	
	nlhho\_1 		& 4.13e+02		& 248		&  6.65e-04		& 2.05e-05 	& 1.78e-04 	&  3.57e-08 	\\ 
\hline 	
	nlhho\_2 		& 1.45e+03		& 251		&  9.50e-04		& 5.99e-04 	& 2.67e-07 	&  1.06e-05	\\ 
\hline 		
	nlhho\_3 		& 3.87e+03		& 254		&  9.85e-04		& 8.58e-04 	& 1.10e-05 	&  1.79e-05	\\ 
\hline 	
\end{tabularx} 
}
\caption{\textbf{Positivity of discrete solutions.} 
}\label{table:positivity}
\end{table}
\vspace{-0.4cm}
In Table \ref{table:positivity}, we show the minimal values reached by the schemes. 
The values of ``mincells" are defined as $\min \lbrace \frac{1}{|K|} \int_K u_\M^n  \mid K \in \M, 1 \leq n \leq N_f \rbrace $, whereas 
``mincellQN" are the minimal values taken by the densities on the cell quadrature nodes.
Analogous definitions hold for the faces.
The values of ``\#resol'' correspond to the number of linear systems solved during the computation.
Note that the size of these systems depends on the value of $k$.
The HMM scheme is a linear one (see \cite{CHHLM:22}), therefore only one LU factorisation was performed to compute the solution, which has 90 (resp. 503) negative cell (resp. face) unknowns.
\vspace{0.19cm}
\noindent
\textbf{Acknowledgements} \hspace{0.2cm}
{The author thanks the anonymous reviewers for their remarks and suggestions, as well as Claire Chainais-Hillairet, Maxime Herda and Simon Lemaire for fruitful discussions about this work.
The author acknowledges support by the Labex CEMPI (ANR-11-LABX-0007-01).}
\vspace{-0.4cm}
%
%
%

\end{document}